\newcommand{\noi}{\noindent}
\begin{document}
\title{Metric Dimension of Amalgamation of Graphs}
\author{
\normalsize Rinovia Simanjuntak, Saladin Uttunggadewa, and Suhadi Wido Saputro\\
\normalsize Combinatorial Mathematics Research Group\\
\normalsize Faculty of Mathematics and Natural Sciences\\
\normalsize Institut Teknologi Bandung, Bandung 40132, Indonesia\\
\scriptsize {\small {\bf e-mail}: {\tt \{rino,s\_uttunggadewa,suhadi\}@math.itb.ac.id}}
}

\date{ }

\newtheorem{lemma}{Lemma}
\newtheorem{theorem}{Theorem}
\newtheorem{proposition}{Proposition}
\newtheorem{corollary}{Corollary}
\newtheorem{definition}{Definition}
\newtheorem{observation}{Observation}

\maketitle

\begin{abstract}
A set of vertices $S$ resolves a graph $G$ if every vertex is uniquely determined by its vector of distances to the vertices in $S$. The metric dimension of $G$ is the minimum cardinality of a resolving set of $G$.

Let $\{G_1, G_2, \ldots, G_n\}$ be a finite collection of graphs and each $G_i$ has a fixed vertex $v_{0_i}$ or a fixed edge $e_{0_i}$ called a terminal vertex or edge, respectively. The \emph{vertex-amalgamation} of $G_1, G_2, \ldots, G_n$, denoted by $Vertex-Amal\{G_i;v_{0_i}\}$, is formed by taking all the $G_i$'s and identifying their terminal vertices. Similarly, the \emph{edge-amalgamation} of $G_1, G_2, \ldots, G_n$, denoted by $Edge-Amal\{G_i;e_{0_i}\}$, is formed by taking all the $G_i$'s and identifying their terminal edges.

Here we study the metric dimensions of vertex-amalgamation and edge-amalgamation for finite collection of arbitrary graphs. We give lower and upper bounds for the dimensions, show that the bounds are tight, and construct infinitely many graphs for each possible value between the bounds.
\end{abstract}

\section{Introduction}

\noi In this paper we consider finite, simple, and connected graphs.
The vertex and edge sets of a graph $G$ are denoted by $V(G)$ and
$E(G)$, respectively.

\noi The \textit{distance} $d(u,v)$ between two vertices $u$ and $v$
in a connected graph $G$ is the length of a shortest $u-v$ path in
$G$. For an ordered set $W = \{w_{1}$, $w_{2}$, $\cdots$, $w_{k}\}$
$\subseteq V(G)$, we refer to the $k$-vector $r(v|W)=(d(v,w_{1}), d(v,w_{2}), \cdots, d(v,w_{k}))$ as the {\em(metric)
representation of $v$ with respect to $W$}. The set $W$ is called a
\textit{resolving set} for $G$ if $r(u|W)= r(v|W)$ implies that $u =
v$ for all $u,v \in G$. In a graph $G$, a resolving set with minimum
cardinality is called a \textit{basis} for $G$. The \textit{metric
dimension}, $dim(G)$, is the number of vertices in a basis for $G$.\\

\noi The metric dimension problem was first introduced in 1975 by Slater \cite{Sla}, and independently by Harary and Melter \cite{Har} in 1976; however the problem for hypercube was studied (and solved asymptotically) much earlier in 1963 by Erd\H{o}s and R\'{e}nyi \cite{ER63}. In general, it is difficult to obtain a basis and metric dimension for arbitrary graph. Garey and Johnson \cite{Garey}, and also Khuller \textit{et al}. \cite{Khu96}, showed that determining the metric dimension of an arbitrary graph is an NP-complete problem. The problem is still NP-complete even if we consider some specific families of graphs, such as bipartite graphs \cite{MARR08}, planar graphs \cite{DPSL12}, or Gabriel unit disk graphs \cite{HW13}. Thus research in this area are then constrained towards: characterizing graphs with particular metric dimensions, determining metric dimensions of particular graphs, and constructing algorithm that "best" approximate metric dimensions.\\

\noi Until today, only graphs of order $n$ with metric dimension 1 (the paths), $n-3$, $n-2$, and $n-1$ (the complete graphs) have been characterized \cite{Char2000,HMPSW10,JO}.
On the other hand, researchers have determined metric dimensions for many particular classes of graphs, such as trees \cite{Char2000, Har, Khu96}, cycles \cite{Char2000}, grids \cite{Melter1984}, complete multipartite graphs \cite{Char2000,BBSSS11}, hypercube \cite{ER63,Li64,Char2000,Be13}, wheels \cite{BCPZ03, CHMPPSW05, Sha2002}, fans \cite{CHMPPSW05}, unicyclic graphs \cite{Poisson2002}, honeycombs \cite{Manuel2008,XF}, circulant graphs \cite{IBBJ12}, Jahangir graphs \cite{Tomescu2007}, Sierp\'{\i}ski graphs \cite{KZ}, and classical binomial random graph \cite{BMP13}. Recently in 2011, Bailey and Cameron \cite{BC11} established relationship between the base size of automorphism group of a graph and its metric dimension. This result then motivated researchers to study metric dimensions of distance regular graphs, such as Grassman \cite{BM11, GWL13}, Johnson, Kneser \cite{BCGGMMP13}, and bilinear form graphs \cite{FW12, GWL13}.\\

\noi In the area of constructing algorithm that "best" approximate metric dimensions, researchers have utilized integer programming \cite{CO01}, genetic algorithm \cite{KKC09}, variable neighborhood search based heuristic \cite{MKKC12}, and greedy constant factor approximation algorithm \cite{HSV12}.\\

\noi There are also some results of metric dimensions of graphs resulting from graph operations; for instance: Cartesian product graphs \cite{Melter1984,Khu96,CHMPPSW07,Wido2008b}, joint product graphs \cite{BCPZ03,CHMPPSW05,Sha2002}, strong product \cite{RKYS}, corona product graphs \cite{YKR10,IBR11}, lexicographic product graphs \cite{SRUABSB13}, hierarchical product graphs \cite{FW13}, and line graphs \cite{KY12, FXW13}.\\

\noi In this paper, we study metric dimension of graphs resulting from another type of graph operations, i.e., vertex-amalgamation and edge-amalgamation of a finite collection of arbitrary graphs. Previous study of such graphs has been done for vertex-amalgamation of two arbitrary graphs \cite{Poisson2002} and vertex-amalgamation and edge-amalgamation of particular families of graphs, which include cycles, complete graphs, and prisms \cite{IBSS10a,IBSS10b,SABISU,SM}. We present these known results in the next section and then provide more general results in the last section: give lower and upper bounds for the dimensions, show that the bounds are tight, and construct infinitely many graphs for each possible value between the bounds.

\section{Previous Results}

\noi Let $\{G_1, G_2, \ldots, G_n\}$ be a finite collection of graphs and each \emph{block} $G_i$ has a fixed vertex $v_{0_i}$ or a fixed edge $e_{0_i}$ called a \emph{terminal vertex} or \emph{edge}, respectively. The \emph{vertex-amalgamation} of $G_1, G_2, \ldots, G_n$, denoted by $Vertex-Amal\{G_i;v_{0_i}\}$, is formed by taking all the $G_i$'s and identifying their terminal vertices. Similarly, the \emph{edge-amalgamation} of $G_1, G_2, \ldots, G_n$, denoted by $Edge-Amal\{G_i;e_{0_i}\}$, is formed by taking all the $G_i$'s and identifying their terminal edges.

\noi In \cite{Poisson2002}, Poisson and Zhang studied vertex-amalgamation of two nontrivial connected graphs $G_1, G_2$ and provide a lower bound as follow.

\begin{theorem} \emph{\cite{Poisson2002}}
Let $G$ be the vertex-amalgamation of nontrivial connected graphs $G_1$ and $G_2$ with terminal vertices $v_{0_1}$ and $v_{0_2}$. Then
\[dim(G) \geq dim(G_1) + dim(G_2) - 2.\]
\label{2}
\end{theorem}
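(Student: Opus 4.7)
The plan is to let $W$ be a basis of $G$, split it as $W_i = W \cap V(G_i)$ for $i=1,2$, and show that each $W_i$, possibly augmented by the identified terminal vertex $v_0$, is a resolving set of $G_i$. Adding up the resulting lower bounds for $|W_i|$ will yield the desired inequality.

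The first step is to record two structural identities that characterize a vertex-amalgamation and drive the whole argument. For $u,v \in V(G_i)$ one has $d_G(u,v) = d_{G_i}(u,v)$, since any $u$-$v$ walk that leaves $G_i$ must return through the cut vertex $v_0$ and can therefore be shortened to a walk inside $G_i$. For $u \in V(G_i)$ and $w \in V(G_j)$ with $j \neq i$ one has $d_G(u,w) = d_{G_i}(u,v_0) + d_{G_j}(v_0,w)$, for the same reason: every $u$-$w$ path is forced to pass through $v_0$.

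Using these identities I would show that $W_i \cup \{v_0\}$ resolves $G_i$. Suppose $u,v \in V(G_i)$ realize the same representation with respect to $W_i \cup \{v_0\}$ inside $G_i$. Then $d_{G_i}(u,v_0) = d_{G_i}(v,v_0)$, so the second identity gives $d_G(u,w) = d_G(v,w)$ for every $w \in W_j$, and the first identity gives $d_G(u,w) = d_G(v,w)$ for every $w \in W_i$. Hence $r(u|W) = r(v|W)$ in $G$, and because $W$ resolves $G$ we conclude $u=v$. Therefore $|W_i \cup \{v_0\}| \geq dim(G_i)$, i.e., $|W_i| \geq dim(G_i) - 1$.

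The remaining step is careful bookkeeping, which is really the only subtlety. If $v_0 \notin W$, then $W_1$ and $W_2$ are disjoint, so $|W| = |W_1|+|W_2| \geq dim(G_1)+dim(G_2)-2$. If instead $v_0 \in W$, then $v_0 \in W_1 \cap W_2$ and $|W| = |W_1|+|W_2|-1$; in this case the same resolving-set argument actually shows $W_i$ itself (without adding $v_0$) resolves $G_i$, so $|W_i| \geq dim(G_i)$ and $|W| \geq dim(G_1)+dim(G_2)-1$. Either way the bound $dim(G) = |W| \geq dim(G_1)+dim(G_2)-2$ holds. I expect no real obstacle beyond this case split; once the two distance identities are in hand, the verification that $W_i \cup \{v_0\}$ is resolving for $G_i$ is essentially forced.
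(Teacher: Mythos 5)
Your proof is correct. Note that the paper itself does not prove this statement --- it is quoted from Poisson and Zhang \cite{Poisson2002} --- so there is no in-paper argument to compare against line by line; however, your route is the standard one, and it is in fact more complete than the related argument the paper does give for its own generalization (the lower bound in Theorem \ref{VA}), which only asserts that a set meeting some block $G_i$ in fewer than $dim(G_i)-1$ vertices cannot resolve. Your write-up supplies exactly the lemma that assertion rests on: since the identified vertex $v_0$ is a cut vertex, $d_G$ restricted to a block agrees with $d_{G_i}$ and cross-block distances factor through $v_0$, whence $(W\cap V(G_i))\cup\{v_0\}$ resolves $G_i$ and so $|W\cap V(G_i)|\ge dim(G_i)-1$; the final case split on whether $v_0\in W$ is handled correctly. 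In short, a sound and self-contained proof of the cited result, by the same cut-vertex projection idea that underlies both the original paper and the generalization here.
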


\noi Other known results are vertex-amalgamation and edge-amalgamation of particular families of graphs, as presented in the following theorems. We denote by $C_n$ the cycle of order $n$, by $K_n$ the complete graph of order $n$, and by $Pr_n$ the prism of order $2n$.

\begin{theorem} \emph{\cite{IBSS10b,SABISU}}
Let $\{C_{c_1}, C_{c_2}, \ldots, C_{c_n}\}$ be a collection of $n$ cycles with $n_e$ cycles of even order. Suppose that $G$ is the vertex-amalgamation of $C_{c_1}, C_{c_2}, \ldots, C_{c_n}$ and $H$ is the edge-amalgamation of $C_{c_1}, C_{c_2}, \ldots, C_{c_n}$. Then
\[dim(G)=\left\{
\begin{array}{ll}
\sum_{i=1}^{n} dim(C_{c_i}) - n & , n_e=0,\\
\sum_{i=1}^{n} dim(C_{c_i}) - n + n_e - 1& , n_e \geq 1
\end{array}
\right.\]
and\\
\[\sum_{i=1}^{n} dim(C_{c_i}) - n - 2 \leq dim(H) \leq \sum_{i=1}^{n} dim(C_{c_i}) - n.\]
\label{Cn}
\end{theorem}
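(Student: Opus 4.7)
My plan is to prove each formula by bounding $\dim(G)$ and $\dim(H)$ above and below separately. The common tool is the \emph{gateway observation}: any $u$-$x$ path with $u \in V(G_i) \setminus \{v\}$ (respectively $V(G_i) \setminus \{v, w\}$) and $x$ outside $G_i$ must pass through the identified vertex $v$ (respectively through $v$ or $w$).

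For the vertex-amalgamation $G$, the gateway implies that two vertices $u_1, u_2 \in V(G_i) \setminus \{v\}$ equidistant from $v$ have identical distances to every vertex outside $G_i$, so any resolving set $W$ must contain a vertex of $V(G_i) \setminus \{v\}$ for each $i$; this yields $|W| \geq n$ and settles the case $n_e = 0$. For the matching upper bound with all odd cycles, I will pick $w_i$ at maximum distance $\lfloor c_i/2 \rfloor$ from $v$ in each $G_i$: intra-cycle arc-partners at distance $d$ from $v$ land at distances $\lfloor c_i/2 \rfloor - d$ and $\lfloor c_i/2 \rfloor + 1 - d$ from $w_i$ (differing by one), while any vertex of $G_i$ with $d \geq 1$ lies at distance $d + \lfloor c_j/2 \rfloor > \lfloor c_j/2 \rfloor$ from every $w_j$ with $j \neq i$, strictly exceeding every intra-$G_j$ distance to $w_j$. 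For $n_e \geq 1$, the extra $n_e - 1$ needs a more delicate argument. Observe that in an even cycle $C_{2k}$, certain near-$v$ pairs on opposite arcs have their wrong-arc distance to a resolver coincide with their via-$v$ distance, so one resolver per even cycle typically fails to separate near-$v$ vertices across distinct even cycles. Fixing one distinguished even cycle $G_{i^*}$, I will show that each other even cycle $G_i$ either contributes a second resolver or produces an explicit unresolved pair between $G_i$ and $G_{i^*}$, extracting the $n_e - 1$ surplus. The matching upper bound places two resolvers at positions $\lfloor c_i/2 \rfloor - 1$ and $\lfloor c_i/2 \rfloor + 1$ in each non-distinguished even cycle (symmetric about the antipode of $v$), together with one resolver in $G_{i^*}$ and in each odd cycle, giving $n + n_e - 1$ in total.

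For the edge-amalgamation $H$, an internal $u \in V(G_i) \setminus \{v, w\}$ can reach an outside resolver via either shared endpoint: $d(u, x) = \min\{d(u, v) + d(v, x),\, d(u, w) + d(w, x)\}$. The twin argument on the two arcs of $G_i$ (now measured from the edge $vw$) yields $|W| \geq n - 2$; the ``$-2$'' reflects that the required resolver in up to two cycles can be delegated to $v$ or $w$ themselves. The upper bound $|W| \leq n$ comes from placing one vertex per cycle at maximum combined distance from $v$ and $w$ and verifying, through a case split on whether the shortest path to each external resolver passes through $v$ or through $w$, that representations are pairwise distinct. The main obstacle throughout will be the $n_e - 1$ surplus in the vertex-amalgamation lower bound, where near-$v$ pairs across different even cycles must be carefully tracked and shown to require genuinely new resolvers; a secondary difficulty is demonstrating tightness at both endpoints of the edge-amalgamation range via explicit constructions (e.g., edge-amalgamations of cycles of equal parity saturating one endpoint, and suitably mixed families the other).
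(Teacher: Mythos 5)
The paper itself does not prove this theorem; it is quoted from \cite{IBSS10b,SABISU}, so your plan can only be judged on its own terms. Your vertex-amalgamation half is essentially the standard route and is workable: the one-resolver-per-block twin argument at $v$, the far-vertex placement in odd blocks, and, for the surplus $n_e-1$, the observation that a single resolver in an even block cannot be the antipode of $v$, so the neighbour of $v$ on the opposite arc is ``shadowed'' (all its distances have the form $d(\cdot,v)+d(v,\cdot)$), and two shadowed vertices in two distinct even blocks get identical representations. But note this is only promised in your text (``typically fails'', ``I will show''), and the correct bookkeeping is ``at most one even cycle can have exactly one internal resolver'', not a comparison of each even cycle against a fixed $G_{i^*}$; you also never specify where the lone resolver of $G_{i^*}$ sits in the upper-bound construction (it must avoid the antipode, else arc-partners in $G_{i^*}$ are unresolved).

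The edge-amalgamation half contains two genuine flaws. For the lower bound, in a cycle with terminal edge $vw$ the map $u\mapsto(d(u,v),d(u,w))$ is injective on $V(G_i)\setminus\{v,w\}$, so there are no within-block twins ``measured from the edge $vw$'', and adding $v$ or $w$ to $W$ never separates the problematic pairs (they are equidistant from $v$ and from $w$ as well); hence your ``twin argument on the two arcs'' plus ``delegation to $v$ or $w$'' does not yield $|W|\ge n-2$. The working mechanism is cross-block: with respect to any resolver outside $G_i$, an internal vertex is seen only through its profile $(d(u,v),d(u,w))$; every block of length at least $4$ contains a vertex with profile $(1,2)$ while a triangle's internal vertex has profile $(1,1)$, so among any three blocks having no internal resolver two of them contain internal vertices with identical profiles, giving at most two resolver-free blocks and $\dim(H)\ge n-2$. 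For the upper bound, your placement ``one vertex per cycle at maximum combined distance from $v$ and $w$'' fails whenever all cycles are odd: that vertex is equidistant from $v$ and $w$, so $r(v|W)=r(w|W)$; worse, in each odd block the two internal vertices symmetric about the chosen middle vertex are at equal distance from it and have mirror profiles, hence equal distances to every other (equidistant) resolver --- e.g.\ for two edge-amalgamated pentagons with both middle vertices chosen, the two neighbours of the middle in one pentagon both get representation $(1,3)$. An asymmetric placement (for instance the internal neighbour of $v$ in each block) does resolve, as a small check on two pentagons shows, so the bound is right but your construction must be changed before the ``case split'' verification can succeed.
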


\begin{theorem} \emph{\cite{SM}} Let $\{K_{k_1}, K_{k_2}, \ldots, K_{k_n}\}$ be a collection of $n$ complete graphs with $n_2$ complete graphs of order $2$ and $n_3$ complete graphs of order $3$. Suppose that $G$ is the vertex-amalgamation of $K_{k_1}, K_{k_2}, \ldots, K_{k_n}$ and $H$ is the edge-amalgamation of $K_{k_1}, K_{k_2}, \ldots, K_{k_n}$. Then
\[dim(G)=\left\{
\begin{array}{ll}
\sum_{i=1}^{n} dim(K_{k_i}) - n + n_2 - 1 & , n_2 \geq 2,\\
\sum_{i=1}^{n} dim(K_{k_i}) - n & , \hbox{otherwise}
\end{array}
\right.\]
and\\
\[dim(H)=\left\{
\begin{array}{ll}
\sum_{i=1}^{n} dim(K_{k_i}) - 2n + 1 & , n_3=0 \hbox{ or } n=2 \hbox{ and } n_3=1,\\
\sum_{i=1}^{n} dim(K_{k_i}) - 2n & , \hbox{otherwise}.
\end{array}
\right.\]
\label{Kn}
\end{theorem}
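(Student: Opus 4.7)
The plan is to establish the two cardinality formulas by pairing a twin-based lower bound with an explicit upper-bound construction, handling the vertex-amalgamation $G$ and the edge-amalgamation $H$ in parallel.

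For $G$, let $v$ denote the shared terminal vertex. The key observation is that $d_G(u, u') = 2$ whenever $u, u'$ lie in distinct blocks (the shortest path goes through $v$), while every intra-block pair of distinct vertices is at distance $1$. Consequently, two non-terminals of the same block that are both absent from a candidate resolving set $W$ have identical representations, forcing $W$ to contain all but at most one non-terminal of each $K_{k_i}$; and two non-terminals drawn from distinct $K_2$ blocks, both absent from $W$, are also indistinguishable (neither block supplies an own-block witness), contributing an extra $n_2 - 1$ to the lower bound when $n_2 \geq 2$. The matching upper bound is realized by a $W$ that omits exactly one non-terminal from each block $K_{k_i}$ with $k_i \geq 3$ and omits the sole non-terminal of exactly one $K_2$ block when one is available; verification that it resolves $G$ is immediate from the distance profile above.

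For $H$, let $v, w$ be the shared terminal vertices and set $U_i = V(K_{k_i}) \setminus \{v, w\}$. I would organize the lower bound around a twin-class census: (a) $\{v, w\}$ is a true-twin pair, since each has $V(H)$ as closed neighborhood; (b) $U_i$ is a true-twin class of size $k_i - 2$ whenever $k_i \geq 4$; and (c) the union $\bigcup\{U_i : k_i = 3\}$ is a false-twin class of size $n_3$, because each such $u$ has $N(u) = \{v, w\}$. A resolving set must contain all but at most one vertex of each twin class, so summing $(|\text{class}|-1)$ over the three families produces the lower bound, with the stated case split reflecting whether any of these classes is degenerate or shares witnesses with another. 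The upper bound comes from choosing, in each block, all but one non-terminal, leaving only a single representative free among the $K_3$-singletons, together with one of $v, w$; a short check on the coordinate values, which all lie in $\{0, 1, 2\}$, shows the resulting set resolves $H$.

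The main obstacle is the boundary analysis that produces the precise case split. In the exceptional regimes ($n_2 \leq 1$ for $G$; $n_3 = 0$ or $(n, n_3) = (2, 1)$ for $H$) a single vertex of $W$ can resolve several would-be independent twin classes simultaneously, or one such class is empty and imposes no constraint. Pinning down when such savings are \emph{forced} rather than merely available is what drives the $\pm 1$ correction, and it is best handled by direct small-case checking alongside an explicit comparison of the constructive count with the worst-case lower bound derived from the twin census.
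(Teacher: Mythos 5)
There is an important preliminary point: the paper never proves Theorem \ref{Kn} at all --- it is quoted from the preprint \cite{SM} as a known result --- so your attempt has to be judged on its own. The twin-class framework you chose is the natural one, and your census is essentially right: in $G$ the non-terminals of each block with $k_i\ge 3$ are mutual true twins and the pendants coming from $K_2$ blocks are mutual false twins; in $H$ the two terminal vertices are true twins, each $U_i$ with $k_i\ge 4$ is a true-twin class, and the $K_3$-singletons form a false-twin class. The genuine gap is that you never carry out the step you yourself call ``the main obstacle'': reconciling the census with the stated right-hand sides. Where this can be checked, the numbers do not match, so ``summing $(|\mathrm{class}|-1)$ over the three families'' does \emph{not} produce the stated formula. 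For $H$ with all $k_i\ge 3$ your census gives $\dim(H)\ \ge\ 1+\sum_{k_i\ge 4}(k_i-3)+\max(n_3-1,0)\ =\ \sum_i \dim(K_{k_i})-2n+n_3$ once $n_3\ge 1$, which strictly exceeds the claimed value $\sum_i \dim(K_{k_i})-2n$ of the ``otherwise'' case; concretely, for the edge-amalgamation of $n\ge 2$ triangles the quoted formula reads $0$ while the dimension is $n$. Your proposed resolving set for $H$ likewise has size equal to the census count, not to the claimed formula, so your two bounds meet at a value different from the one in the statement, and no ``boundary analysis'' can close a discrepancy that grows with $n_3$.

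The vertex-amalgamation part has a second, more local, problem: the verification of your upper-bound set is not ``immediate.'' With your $W$, the terminal vertex and the omitted non-terminal of a block $K_{k_i}$, $k_i\ge 3$, receive identical representations unless $W$ contains a vertex \emph{outside} that block; this fails when exactly one block has order at least $3$ and $n_2\le 1$. For example, the vertex-amalgamation of $K_3$ and one $K_2$ (the paw) has dimension $2$, while the quoted formula gives $1$. These boundary failures show that the statement as transcribed needs implicit hypotheses (such as $k_i\ge 3$ for the edge case, or restrictions on $n$), and your proposal, by deferring exactly the case analysis where all the trouble sits, does not establish the equalities claimed.
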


\begin{theorem} \emph{\cite{SM}} Let $\{Pr_{p_1}, Pr_{p_2}, \ldots, Pr_{p_n}\}$ be a collection of $n$ prisms with $n_o$ prisms of odd order. Suppose that $G$ is the vertex-amalgamation of $Pr_{p_1}, Pr_{p_2}, \ldots, Pr_{p_n}$ and $H$ is the edge-amalgamation of $Pr_{p_1}, Pr_{p_2}, \ldots, Pr_{p_n}$. Then
\[dim(G)=\left\{
\begin{array}{ll}
\sum_{i=1}^{n} dim(Pr_{p_i}) - n & , n_o=0,\\
\sum_{i=1}^{n} dim(Pr_{p_i}) - n + n_o - 1& , n_o \geq 1
\end{array}
\right.\]
and\\
\[dim(H)=\sum_{i=1}^{n} dim(Pr_{p_i}) - n + n_o - 1.\]
\label{Prn}
\end{theorem}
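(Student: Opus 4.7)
\noindent The plan is to prove the vertex-amalgamation and edge-amalgamation parts separately, and in each case establish matching lower and upper bounds in the same style as Theorems~\ref{Cn} and~\ref{Kn}. The governing principle behind every lower bound is the following: if $W$ resolves $G$ (respectively $H$), and if $x,y$ lie in the same block $Pr_{p_i}$ with $d(x,v_{0_i})=d(y,v_{0_i})$ (respectively with $x,y$ equidistant from both endpoints of $e_{0_i}$), then any vertex outside block $i$ assigns them identical distances, so $W$ must meet $V(Pr_{p_i})$ in a set that resolves this pair. Consequently the lower bound reduces to a purely local counting problem inside each prism.

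First I would analyse a single prism $Pr_p$ together with a prescribed terminal vertex or edge and determine the minimum number of vertices of $Pr_p$ needed to resolve every pair of vertices that is equidistant from that terminal. Using the $C_p\times K_2$ structure, these equidistance classes are straightforward to enumerate: for odd $p$ the only symmetry fixing the terminal is the reflection through it, while for even $p$ the antipodal symmetry of the underlying cycle enlarges the classes and forces one additional resolving vertex. This yields, for each block, a local contribution of $dim(Pr_{p_i})-1$ or $dim(Pr_{p_i})$ depending on parity and on whether the block is allowed to ``donate'' the shared terminal to its local resolving set. Summing across all blocks and accounting for the single global saving afforded by the shared terminal produces the claimed lower bounds; note in particular that the edge-amalgamation collapses to the uniform value $2n-1$ because the terminal edge always supplies enough asymmetry, whereas in the vertex-amalgamation the extra saving $n_o-1$ appears only when at least one odd prism is present.

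For the upper bound I would exhibit explicit resolving sets. In each $Pr_{p_i}$ I place a basis shifted so that it separates the local equidistance classes, using the standard two-vertex basis when $p_i$ is odd and a three-vertex basis when $p_i$ is even. When at least one $p_i$ is odd I designate that block to absorb the shared terminal, saving an additional vertex and explaining the jump from $-n$ to $-n+n_o-1$ in the vertex case; in the edge-amalgamation the terminal edge can always absorb the corresponding role, yielding the uniform $2n-1$. Resolvability of the union then rests on two routine facts: the local basis separates vertices within its own block, and the distance from any vertex to the shared terminal already identifies the block it lies in, so representations across distinct blocks cannot coincide.

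The main obstacle is the parity case analysis and the careful placement of basis vertices relative to the terminal. Odd prisms behave well because a two-vertex basis can be chosen at prescribed positions relative to the terminal, whereas even prisms require a three-vertex basis that breaks an extra antipodal symmetry, which is precisely what forces the correction term $+n_o-1$. Managing this bookkeeping uniformly over all parity distributions, and in particular certifying that exactly one ``free'' saving of $1$ is available in the vertex-amalgamation if and only if some $p_i$ is odd (while the edge-amalgamation receives that saving unconditionally), will be the technically laborious step and mirrors the analogous argument for cycles in Theorem~\ref{Cn}.
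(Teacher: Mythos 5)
You should first note that this paper does not prove Theorem~\ref{Prn} at all: it is quoted from the preprint \cite{SM} as a known result, so there is no in-paper proof to compare with, and your proposal has to stand on its own. On its own terms it has an inconsistency with the very formula it is meant to establish. You attribute the correction term $+n_o-1$ to the even prisms (``even prisms require a three-vertex basis that breaks an extra antipodal symmetry, which is precisely what forces the correction term'') and you describe the passage from $-n$ to $-n+n_o-1$ as a ``saving''. But $n_o$ counts the \emph{odd} prisms, and $-n+n_o-1$ exceeds $-n$ as soon as $n_o\ge 2$. Since $dim(Pr_{p})=3$ for even $p$ and $2$ for odd $p$, consistency with the stated formula forces the opposite bookkeeping: every even block contributes exactly $dim-1=2$ vertices (the baseline), while every odd block must contribute $dim=2$, i.e.\ one more than $dim-1=1$, except a single odd block. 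A parity analysis in which the even prisms are the expensive ones would produce a correction scaling with $n-n_o$, i.e.\ a different theorem.

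The deeper gap is your reduction of the lower bound to ``a purely local counting problem inside each prism'' together with the ``routine fact'' that the distance to the shared terminal identifies a vertex's block, so that cross-block representations cannot coincide. That fact is false, and the cross-block conflicts it ignores are precisely where the $+n_o-1$ comes from. Concretely, amalgamate two triangular prisms at a vertex $a_0$: inside one block the pair $\{a_0,a_1\}$ (terminal plus an outer neighbour) already resolves $Pr_3$, so your local requirement is only one vertex per odd block, giving a lower bound of $2$, whereas the theorem asserts $dim=3$; and indeed for $W=\{a_1,a_1'\}$ the inner neighbours $b_0,b_0'$ of the terminal in the two blocks both receive representation $(2,2)$. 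So neither your lower bound (note also that pairs equidistant from the terminal are not just the orbit of the reflection --- cross-layer pairs occur, so the symmetry enumeration is not enough) nor your upper-bound verification survives without a genuine analysis of inter-block pairs, in the spirit of the even-cycle analysis behind Theorem~\ref{Cn}. The one observation that does check out is that the edge-amalgamation value is uniformly $2n-1$; but the argument you give for it is the same invalid one.
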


\section{Main Results}

\noi The next theorem provide the sharp lower and upper bounds for the metric dimension of vertex-amalgamation of finite collection of arbitrary graphs, as well as a construction showing that all values between the bound are attainable.
\begin{theorem}
Let $\{G_1, G_2, \ldots, G_n\}$ be a finite collection of graphs and $v_{0_i}$ is a terminal vertex of $G_i$, $i=1,2,\ldots,n$. If $G$ is the vertex-amalgamation of $G_1, G_2, \ldots, G_n$, $Vertex-Amal\{G_i;v_{0_i}\}$, then
\[\sum_{i=1}^{n} dim(G_i) - n \leq dim(G) \leq \sum_{i=1}^{n} dim(G_i) + n - 1.\]
Moreover, the bounds are sharp and there are infinitely many graphs with dimension equal to all values within the range of the bounds.
\label{VA}
\end{theorem}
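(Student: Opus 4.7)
The geometry is simple: distances within a single block $G_i$ are preserved, and every $u$-$w$ path with $u\in V(G_i)\setminus\{v_0\}$ and $w\in V(G_j)$, $i\neq j$, must cross $v_0$, giving $d_G(u,w)=d_{G_i}(u,v_0)+d_{G_j}(v_0,w)$. For any resolving set $W$ of $G$ and any block index $i$, I will show that $W_i^+:=(W\cap V(G_i))\cup\{v_0\}$ resolves $G_i$: two vertices $u,v\in V(G_i)\setminus\{v_0\}$ receive the same distance from every $w\in W$ outside $V(G_i)$ precisely when $d_{G_i}(u,v_0)=d_{G_i}(v,v_0)$, so if no $w\in W\cap V(G_i)$ separates them then $v_0$ itself does. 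Applying $|W_i^+|\geq dim(G_i)$ and summing over $i$, while keeping track of whether $v_0\in W$ (which changes the overcount in $\sum_i |W_i^+|$ by $n-1$), yields $|W|\geq \sum_{i=1}^n dim(G_i)-n$.

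\textbf{Upper bound.} I will exhibit a resolving set of the claimed size. Let $B_i$ be a basis of $G_i$, pick an auxiliary vertex $a_i\in V(G_i)\setminus\{v_{0_i}\}$ for each $i\in\{2,\ldots,n\}$, and set $W:=\bigcup_{i=1}^n B_i\cup\{a_2,\ldots,a_n\}$, which has at most $\sum dim(G_i)+n-1$ elements. Within-block pairs are resolved by $B_i$ alone, since distances within a block agree with those in $G$. For a cross-block pair $u\in V(G_i)\setminus\{v_0\}$, $v\in V(G_j)\setminus\{v_0\}$ with $i\neq j$, either $d_{G_i}(u,v_0)\neq d_{G_j}(v,v_0)$ and any $b\in B_i$ separates them by the strict triangle comparison $d_{G_i}(u,b)\leq d_{G_i}(u,v_0)+d_{G_i}(v_0,b)<d_{G_j}(v,v_0)+d_{G_i}(v_0,b)=d_G(v,b)$; or both distances equal $d^*$ and the equalities $r(u|B_i)=r(v_0|B_i)+d^*\mathbf{1}$, $r(v|B_j)=r(v_0|B_j)+d^*\mathbf{1}$ (forced by the basis vertices failing to separate) constrain $u$ and $v$ to the ``bad'' parts of their blocks, where a suitable choice of $a_i$ or $a_j$ violates the constraint and yields a strict inequality $d_{G_j}(v,a_j)<d^*+d_{G_j}(v_0,a_j)=d_G(u,a_j)$. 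The principal task is to select the $a_i$'s so that every bad configuration is broken simultaneously, ideally by choosing each $a_i$ from the unique basis-free component of $G_i\setminus\{v_{0_i}\}$ (when it exists), which the structural analysis shows is path-like from $v_{0_i}$.

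\textbf{Sharpness and intermediate values.} The lower bound is attained by the vertex-amalgamation of $n$ odd cycles, whose dimension by Theorem~\ref{Cn} equals $\sum dim(C_{c_i})-n$. The upper bound is attained by the vertex-amalgamation of $n$ copies of $K_{1,m}$ at their centres: the result is $K_{1,nm}$, whose dimension $nm-1$ equals $n(m-1)+(n-1)=\sum dim(K_{1,m})+n-1$. For each value strictly between the two extremes, I will interpolate by mixing $k$ star blocks (pushing the dimension toward the upper bound) with $n-k$ odd-cycle blocks (pulling it toward the lower bound); varying $k$ realises every integer in the admissible range, and varying the orders $m$ and $c_i$ of the individual blocks then furnishes infinitely many examples at each attainable value.

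\textbf{Main obstacle.} The weight of the argument sits in the upper bound, and in particular in controlling the bad vertices of each $G_i$. In well-behaved situations (for example when $v_{0_i}$ is a cut vertex with a single basis-free component) the structural lemma shows that component is a path, and a single extra vertex drawn from it breaks every bad pair by the $-2$ slack in the triangle inequality. In less well-behaved situations, where bad vertices can arise even in basis-containing components (via equalities forced by multiple gateways to $v_{0_i}$), the choice of $a_i$ needs additional care; verifying that $n-1$ auxiliaries in total are always enough, matching the count $\sum dim(G_i)+n-1$, is the step where the main work lies.
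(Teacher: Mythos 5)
Your lower-bound argument is correct and is essentially a careful version of the paper's one-line count: you make explicit that $(W\cap V(G_i))\cup\{v_{0_i}\}$ resolves $G_i$, which is exactly what the paper leaves implicit. Your two endpoint examples are also right (odd cycles attain the lower bound by Theorem~\ref{Cn}, and amalgamating $n$ stars $K_{1,m}$ at their centres gives $K_{1,nm}$, which attains the upper bound). The first genuine gap is in the upper bound, and it sits exactly where you admit ``the main work lies'': you never prove that one auxiliary vertex per block (omitting one block) can always be chosen to break all cross-block coincidences, and your worry about bad vertices in ``basis-containing components'' suggests you have not isolated the fact that closes the argument (the paper's own sketch, which adds a vertex per pair of blocks, glosses over the same point). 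The missing lemma is clean and needs no case analysis: for a basis $B_i$ of $G_i$, let $X_i=\{u:\ d_{G_i}(u,b)=d_{G_i}(u,v_{0_i})+d_{G_i}(v_{0_i},b)\ \text{for all } b\in B_i\}$. Two vertices of $X_i$ at the same distance from $v_{0_i}$ would have identical representations with respect to $B_i$, so $X_i$ has at most one vertex per level; and if $u\in X_i$ is at level $l\ge 1$, its neighbour at level $l-1$ on a shortest $u$--$v_{0_i}$ path again lies in $X_i$. Hence $X_i$ is the vertex set of a single geodesic $v_{0_i}u_1\cdots u_t$, and choosing $a_i=u_t$ gives $d(u_k,a_i)\le t-k<k+t=d(u_k,v_{0_i})+d(v_{0_i},a_i)$ for all $k\ge 1$, so one auxiliary empties the bad set of every block it is added to, and your set $\bigcup_i B_i\cup\{a_2,\ldots,a_n\}$ then resolves $G$. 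Without this (or an equivalent) argument, your upper bound is a plan rather than a proof.

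The second gap is the interpolation: the star/odd-cycle mixture provably misses half the values. The amalgam of $k\ge 1$ stars $K_{1,m}$ (at their centres) and $n-k$ odd cycles has $km$ pendant twins at the identified vertex and each cycle forces one further resolving vertex (its reflection through $v_{0}$ is an automorphism fixing everything else), so its dimension is $km-1+(n-k)$, while $\sum_{i} dim(G_i)-n=k(m-1)+2(n-k)-n$; the excess over the lower bound is therefore $2k-1$. Varying $k$ yields only the offsets $0,1,3,5,\ldots,2n-1$, never the even values $2,4,\ldots,2n-2$, and varying $m$ or the cycle lengths does not change the offset, so ``varying $k$ realises every integer'' is false as stated. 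To fill the even values you need a block whose attachment shifts the count by one rather than two: for instance, taking $k$ stars, one path whose terminal vertex is a leaf, and $n-k-1$ odd cycles gives dimension $km+n-k-1$ and excess exactly $2k$. The two families together, with the block orders varied, do give infinitely many graphs at every offset $0,1,\ldots,2n-1$, which is what the statement requires.
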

\begin{proof}
For the lower bound, consider a vertex set $W$ with cardinality less than $\sum_{i=1}^{n} dim(G_i) - n$. Consequently, there exists a block $G_i$ which the cardinality of its intersection with $W$ is less than $dim(G_i) - 1$. Therefore $W$ could not be a resolving set of $G$ and so \[dim(G) \geq \sum_{i=1}^{n} dim(G_i) - n.\]

\noi For the upper bound, consider two arbitrary blocks $G_i$ and $G_j$ of $G$ and their basis $R_i$ and $R_j$. Clearly, at most two vertices in $G_i \bigcup G_j$, say $x$ and $y$, could have the same representation with respect to $R_i \bigcup R_j$, since otherwise there exist two vertices in a block, say $G_i$, having the same representation with respect to $R_i$, a contradiction with $R_i$ being a resolving set. Thus, to guarantee all vertices in $G_i \bigcup G_j$ have have different representation, we have to add either $x$ or $y$ to $R_i \bigcup R_j$. If we consider each pair of blocks in $G$, we obtain
\[dim(G) \leq \sum_{i=1}^{n} dim(G_i) + n - 1.\]

\noi Now let us start our construction by considering $\{G_1, G_2, \ldots, G_n\}$ as a finite collection of complete graphs of order at least 3. By Theorem \ref{Kn}, $dim(G)=\sum_{i=1}^{n} dim(G_i) - n$, which achieve the lower bound. We then replace $G_1$ with a path consisting non-leaf terminal vertex. Let $B$ be the union of all the blocks' basis. Since the path has dimension 1 and its basis vertex is a leaf vertex, then the two vertices of the path adjacent to the terminal vertex will have the same representation with respect to $B$. Thus we have to add one vertex, i.e. one of the two vertices of the path adjacent to the terminal vertex, to $B$ in order to obtain a basis for $G$. This results in $dim(G)=\sum_{i=1}^{n} dim(G_i) - n + 1$, which increases the lower bound by one. We continue this process by replacing the $G_i$s one at a time until all complete graphs are replaced with paths (see Figure \ref{ULBVA}). The resulting graph is a subdivided star, whose dimension achieves the upper bound.
\end{proof}

\begin{figure}[h]
\centerline{\includegraphics[width=0.9\textwidth, height=70px]{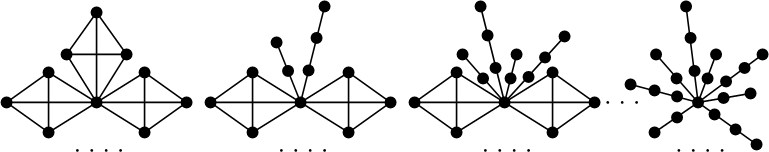}}
\caption{\small Vertex-amalgamations of graphs whose dimensions attaining all values between the lower and upper bounds.}
\protect\label{ULBVA}
\end{figure}

\noi Note that the lower bound in the previous theorem generalizes the result of Poisson and Zhang in Theorem \ref{2}. From Theorems \ref{Cn} and \ref{Prn}, we can see that there exist amalgamations of particular cycles and prisms whose dimensions attaining the lower bounds. These graphs could be used in the construction of the proof of Theorem \ref{VA}.

\noi To prove the result for edge-amalgamation of a finite collection of graphs, we need to know the dimensions of two special graphs. The first graph is complete bipartite graphs $K_{m,n}$. It is known that $dim(K_{m,n})=m+n-2$ and the basis consists of all vertices in $K_{m,n}$ except for one vertex from each partite set. The second graph is a variation of a cycle of order $n$, $C_n$. Suppose that $V(C_n)=\{x_1, x_2, \ldots, x_n\}$ and $E(C_n)=\{x_n x_1, x_i x_{i+1}, i=1, 2, \ldots, n-1\}$. We add two vertices $y_2$, $y_5$ and six edges $y_2 x_i, i=1, 2, 3$, $y_5 x_i, i=4, 5, 6$. We call the resulting graph a \emph{double-hats cycle}, denoted by $DHC_n$ (see Figure \ref{DHCn}). It is easy to see that a resolving set of $DHC_n$ must consist two vertices: either $x_2$ or $x_5$ and either $y_2$ or $y_5$. On the other hand, the set $\{x_2,y_5\}$ is a resolving set of $DHC_n$, and so $dim(DHC_n)=2$.

\begin{figure}[h]
\centerline{\includegraphics[width=0.2\textwidth, height=60px]{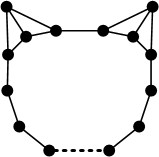}}
\caption{\small The double-hats cycle, $DHC_n$.}
\protect\label{DHCn}
\end{figure}

\begin{theorem}
Let $\{G_1, G_2, \ldots, G_n\}$ be a finite collection of graphs and $e_{0_i}$ is a terminal edge of $G_i$, $i=1,2,\ldots,n$. If $H$ is the edge-amalgamation of $G_1, G_2, \ldots, G_n$, $Edge-Amal\{G_i;e_{0_i}\}$, then
\[\sum_{i=1}^{n} dim(G_i) - 2n \leq dim(H) \leq \sum_{i=1}^{n} dim(G_i) + n - 1.\]
Moreover, the bounds are sharp and there are infinitely many graphs with dimension equal to all values within the range of the bounds.
\label{EA}
\end{theorem}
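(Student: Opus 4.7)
I would mimic the lower-bound argument of Theorem~\ref{VA}, promoting the single identified vertex to the \emph{pair} of endpoints $u_1, u_2$ of the shared terminal edge. Assume some $W \subseteq V(H)$ resolves $H$ with $|W| < \sum_{i=1}^{n} dim(G_i) - 2n$; then pigeonhole forces a block $G_i$ with $|W \cap V(G_i)| < dim(G_i) - 2$, and even adjoining $u_1, u_2$ leaves fewer than $dim(G_i)$ vertices inside $G_i$, so this augmented set fails to resolve $G_i$. Two witnesses $x, y \in V(G_i)$ then satisfy $d(x, u_k) = d(y, u_k)$ for $k = 1, 2$. Since any shortest path in $H$ from $x$ (or $y$) to a vertex $w$ outside $G_i$ must exit $G_i$ through $\{u_1, u_2\}$, we have
\[
d_H(x, w) \;=\; \min\{d(x, u_1) + d(u_1, w),\; d(x, u_2) + d(u_2, w)\},
\]
and symmetrically for $y$; hence $d_H(x, w) = d_H(y, w)$, contradicting the resolvability of $W$.

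\textbf{Upper bound.} I would repeat the pairwise argument of Theorem~\ref{VA} verbatim: taking $R = \bigcup_i R_i$ with each $R_i$ a basis of $G_i$, for any two blocks $G_i, G_j$ at most two vertices of $G_i \cup G_j$ can share a representation with respect to $R_i \cup R_j$ (otherwise one of $R_i, R_j$ would fail to resolve its own block), and breaking all such coincidences across the $\binom{n}{2}$ pairs costs at most $n - 1$ extra vertices.

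\textbf{Sharpness and intermediate values.} Following the construction template of Theorem~\ref{VA}, I would begin with an edge-amalgamation realizing the lower bound and then replace blocks one at a time, each substitution designed to add exactly $+1$ to $dim(H)$, until the upper bound is reached. A natural starting family is the edge-amalgamation of $n$ copies of $K_{m,m}$ (with $m \geq 3$): as recalled above the theorem, a basis of $K_{m,m}$ can be chosen disjoint from the terminal edge, and a direct distance analysis---mirroring the argument used for Theorem~\ref{Kn}---will show that the resulting graph attains $\sum dim(G_i) - 2n$. I would then successively substitute blocks with the double-hats cycle $DHC_\ell$ (whose rigid basis structure, described just before the theorem, forces one new basis vertex per substitution in the amalgamation) and eventually with paths, stepping the dimension up through all $3n$ integer values. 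Varying $m$, $\ell$, and the path lengths yields infinitely many examples at each intermediate value. The hard part will be precisely this calibration: verifying first that the $K_{m,m}$-amalgamation really hits the lower bound, and second that each subsequent substitution contributes exactly $+1$ to $dim(H)$ through its interaction---via the shared terminal edge---with the basis vertices already chosen in the other blocks.
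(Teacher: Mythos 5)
Your lower- and upper-bound arguments follow the paper's route exactly: the same pigeonhole count forcing some block to carry fewer than $dim(G_i)-2$ chosen vertices, and the same pairwise "at most one coincident pair per pair of blocks" argument for the upper bound; in fact your lower bound is \emph{more} complete than the paper's, since you make explicit the step the paper leaves implicit, namely that $\{u_1,u_2\}$ separates a block from the rest so that two vertices of $G_i$ agreeing on $(W\cap V(G_i))\cup\{u_1,u_2\}$ agree on all of $W$. (One small bookkeeping point: apply the pigeonhole to $W\setminus\{u_1,u_2\}$, since the two identified vertices lie in every block and would otherwise be counted $n$ times.) Your construction is also the paper's: edge-amalgamation of symmetric complete bipartite graphs $K_{m_i,m_i}$ with bases chosen off the terminal edge to realize the lower bound, followed by block-by-block substitution of double-hats cycles.

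The genuine gap is the part you yourself flag as "calibration," and it does not close as sketched. The range $[\sum_i dim(G_i)-2n,\ \sum_i dim(G_i)+n-1]$ contains $3n$ integers, so $n$ substitutions each contributing exactly $+1$ cannot sweep it; moreover each substitution of $K_{m,m}$ (dimension $2m-2$) by $DHC_\ell$ (dimension $2$) also changes the benchmark $\sum_i dim(G_i)$, so "adds $+1$ to $dim(H)$" and "moves one step up inside the (shifting) range" are different claims and neither is verified. Indeed each $DHC$ block forces at least two basis vertices in the amalgamation (the true-twin pairs $\{x_2,y_2\}$ and $\{x_5,y_5\}$) against a lower-bound allowance of $dim-2=0$, so a single substitution jumps the offset from the lower bound by more than one. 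Your additional "eventually with paths" phase has no analogue in the paper and its effect on an edge-amalgamation is not analyzed at all. To be fair, the paper's own proof asserts the same "$+1$ per substitution" and the final upper-bound value with no more justification than you give, so you have reproduced its strategy faithfully; but as a proof of the "all intermediate values are attained" clause, your write-up (like the paper's) still owes the verification that the starting $K_{m,m}$-amalgamation meets the lower bound and an explicit family realizing each of the $3n$ values, e.g.\ by mixing blocks whose per-block contributions are $dim-2$, $dim-1$, $dim$, and $dim+1$ in controlled proportions rather than by a single type of substitution.
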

\begin{proof}
For the lower bound, consider a vertex set $W$ with cardinality less than $\sum_{i=1}^{n} dim(G_i) - 2n$. Consequently, there exists a block $G_i$ which the cardinality of its intersection with $W$ is less than $dim(G_i) - 2$. Therefore $W$ could not be a resolving set of $H$ and so \[dim(G) \geq \sum_{i=1}^{n} dim(G_i) - 2n.\]

\noi For the upper bound, consider two arbitrary blocks $G_i$ and $G_j$ of $G$ and their basis $R_i$ and $R_j$. Clearly, at most two vertices in $G_i \bigcup G_j$, say $x$ and $y$, could have the same representation with respect to $R_i \bigcup R_j$, since otherwise there exist two vertices in a block, say $G_i$, having the same representation with respect to $R_i$, a contradiction with $R_i$ being a resolving set. Thus, to guarantee all vertices in $G_i \bigcup G_j$ have have different representation, we have to add either $x$ or $y$ to $R_i \bigcup R_j$. If we consider each pair of blocks in $G$, we obtain
\[dim(G) \leq \sum_{i=1}^{n} dim(G_i) + n - 1.\]

\noi Similarly to the construction for vertex-amalgamation of graphs in the proof of Theorem \ref{VA}, we start by considering $\{G_1, G_2, \ldots, G_n\}$ as a finite collection of symmetric complete bipartite graphs $K_{m_i,m_i}$ with vertex-set partitioned into $\{x_1, x_2, \ldots, x_{m_i}\}$ and $\{y_1, y_2, \ldots, x_{m_i}\}$. Let $x_{m_i}y_{m_i}$ be the terminal edge in each $K_{m_i,m_i}$. By the first part of the theorem, we have $dim(H) \geq \sum_{i=1}^{n} dim(G_i) - 2n$. Now, consider the set $R=\bigcup_{i=1}^n \{x_1, x_2, \ldots, x_{m_i-2}\}$. It is easy to see that $R$ is a resolving set for $H$, and thus $dim(H) = \sum_{i=1}^{n} dim(G_i) - 2n$. Therefore we have amalgamation of graphs attaining the lower bound.

\noi We then replace $G_1$ with a double-hats cycle $DHC_n$ with terminal edge $x_6x_7$ (refer to the standard vertices notation of $DHC_n$). Let $B$ be the union of all the blocks' basis. The two vertices of $DHC_n$ adjacent to the terminal edge will have the same representation with respect to $B$. Thus we have to add one vertex, i.e. one of the two vertices of $DHC_n$ adjacent to the terminal edge, to $B$ in order to obtain a basis for $H$. This results in $dim(H)=\sum_{i=1}^{n} dim(G_i) - n + 1$, which increases the lower bound by one. We continue this process by replacing the $G_i$s one at a time until all complete bipartite graphs are replaced with $DHC_n$s. The dimension of the resulting graph then achieves the upper bound.
\end{proof}

\noi Notice that there exists edge-amalgamation of some complete graphs with dimension equal to the lower bound (see Theorem \ref{Kn}), and so these graphs could be used in the construction of the proof of the previous theorem. We could also see from Theorems \ref{Cn}, \ref{Kn}, and \ref{Prn}, that the dimensions of edge-amalgamations of cycles and prims are the middle values between the lower and upper bounds, while the dimensions of edge-amalgamation of complete graphs are values around the lower bound. Determining which collection of graphs whose vertex-amalgamation and edge-amalgamation have small dimensions (close to the lower bounds) might be seen as interesting problems.

\end{document}